\newtheoremstyle{kai}
{3pt}{3pt}{}{}{\bfseries}{.}{.5em}{}
\def\EquationsBySection{\def\theequation
{\thesection.\arabic{equation}}%
\@addtoreset{equation}{section}}
\newcommand\old[1]{}
\newcommand{\pend}{\hfill \thicklines \framebox(6.6,6.6)[l]{}}
\renewenvironment{proof}{\noindent {\it  Proof.} \rm}{\pend}
\newtheorem{theorem}{Theorem}[section]
\newtheorem{lemma}{Lemma}[section]
\newtheorem{remark}{Remark}[section]
\newtheorem{example}{Example}[section]
\begin{document}
\pagestyle{plain}
\title
{\bf Existence of Invariant Measures of Stochastic\\ Systems with Delay in the Highest\\ Order Partial Derivatives}
\author{
Kai Liu
\\
\\
\small{Department of Mathematical Sciences,}\\
\small{School of Physical Sciences,}\\
\small{The University of Liverpool,}\\
\small{Peach Street, Liverpool, L69 7ZL, U.K.}\\
\small{E-mail: k.liu@liv.ac.uk}\\}

\date{}
\maketitle

\noindent {\bf Abstract:} In this note, we shall consider the existence of invariant measures for a class of infinite dimensional stochastic functional differential equations with delay whose driving semigroup  is eventually norm continuous. The results obtained are applied to stochastic heat equations with distributed delays which  appear in such terms having the highest order partial derivatives. In the systems, the associated driving semigroups are generally non  eventually compact. 

\vskip 50pt
\noindent {\bf Keyword:} Invariant measure; Eventually norm continuous; Distributed delay; Stochastic functional differential equation.
\vskip 10pt

\noindent{\bf 2000 Mathematics Subject Classification(s):} 60H15, 60G15, 60H05.

\newpage
\section{Introduction}
Let $X$ and $K$ be two separable Hilbert spaces and ${\mathscr L}(K, X)$ the space of all bounded and linear operators from $K$ into $X$. We also denote by ${\mathscr L}_{HS}(K, X)$  the space of all Hilbert-Schmidt operators from $K$ into $X$. 
The goal of this paper is to consider the existence of invariant measures for the following stochastic functional differential  equation (SFDE) of retarded type on $X$, 
\begin{equation}
\label{13/09/13(1)}
\begin{cases}
\displaystyle du(t) =\Lambda u(t)dt  + \displaystyle\int^0_{-r} \alpha(\theta)\Lambda_1 u(t+\theta)d\theta dt + \Lambda_2 u(t-r)dt +\Psi(u_t)dt + \Sigma(u_t)dW(t),\,\,t\ge 0,\\
u(0)=\phi_0,\,\,\,\,u(\theta)=\phi_1(\theta),\,\,\,\theta\in [-r, 0],\,\,\,r>0,
\end{cases}
\end{equation}  
where $\Lambda: {\mathscr D}(\Lambda)\subset X\to X$ generates an analytic semigroup of bounded linear operators $e^{t\Lambda}$, $t\ge 0$, on $X$, $u_t(\theta) := u(t+\theta)$, $\theta\in [-r, 0]$, $t\ge 0$, $\alpha\in L^2([-r, 0]; {\mathbb R})$ and $\Lambda_i: {\mathscr D}(\Lambda_i)\subset X\to X$ are  linear, generally unbounded,  operators on $X$ with ${\mathscr D}(\Lambda)\subset {\mathscr D}(\Lambda_i)$, $i=1,\,2$. Here ${\mathscr D}(\Lambda)$ is the domain of $\Lambda$ which is a Hilbert space under the usual graph norm. The initial value $\phi=(\phi_0, \phi_1)$ belongs to the product space ${\cal X} = M\times L^2([-r, 0]; {\mathscr D}(\Lambda))$ with $M=({\mathscr D}(\Lambda), X)_{1/2, 2}$ being the Lions' interpolation Hilbert space between ${\mathscr D}(\Lambda)$ and $X$, $W$ is a $K$-valued Wiener process and $\Psi: L^2([-r, 0]; {\mathscr D}(\Lambda))\to M$, $\Sigma: L^2([-r, 0]; {\mathscr D}(\Lambda))\to {\mathscr L}_{HS}(K, M)$ are appropriate nonlinear Lipschitz continuous mappings. 

One of the most effective approaches in handling (\ref{13/09/13(1)}) is to lift  it up  to obtain a stochastic evolution equation without delay on ${\cal X}$. Precisely, we define a linear operator ${A}$ on ${\cal X}$ by 
\begin{equation}
\label{13/09/13(2)}
\begin{split}
{\mathscr D}({A}) = \Big\{\phi=(\phi_0, \phi_1):\, &\phi_0=\phi_1(0)\in {\mathscr D}(\Lambda),\,\,\phi_1\in W^{1, 2}([-r, 0]; {\mathscr D}(\Lambda)),\\
&\hskip 30pt \Lambda\phi_0   +  \displaystyle\int^0_{-r}\alpha(\theta)\Lambda_1\phi_1(\theta)d\theta + \Lambda_2 \phi_1(-r)\in M\Big\},
\end{split}
\end{equation}
and for any $\phi=(\phi_0, \phi_1)\in {\mathscr D}({A})$, let
\begin{equation}
\label{20/08/2013(20)}
{A}\phi= \Big(\Lambda\phi_0  +  \displaystyle\int^0_{-r}\alpha(\theta)\Lambda_1\phi_1(\theta)d\theta + \Lambda_2 \phi_1(-r), \frac{d\phi_1(\theta)}{d\theta}\Big).
\end{equation}
 It may be shown that ${A}$ generates a strongly continuous or $C_0$-semigroup $e^{t{A}}$, $t\ge 0$, on the space ${\cal X}$. As a consequence, we may lift up (\ref{13/09/13(1)})  onto ${\cal X}$ to consider a stochastic differential equation without delay on ${\cal X},$
 \begin{equation}
\label{13/09/13(3)}
\begin{cases}
\displaystyle dU(t) ={A}U(t)dt + F(U(t))dt + {B}(U(t))dW(t),\,\,\,\,t\ge 0,\\
U(0)=(\phi_0, \phi_1)\in {\cal X},
\end{cases}
\end{equation}  
where $U(t)= (u(t), u_t)$,  $t\ge 0$ and $F: {\cal X}\to {\cal X}$ and ${B}: {\cal X}\to {\mathscr L}(K, {\cal X})$ are defined respectively by
\[
 {F}:\, (\phi_0, \phi_1)\to (\Psi\phi_1, 0)\hskip 15pt\hbox{for any}\hskip 15pt (\phi_0, \phi_1)\in {\cal X},
\]
and
\[
{B}:\, (\phi_0, \phi_1)\to (\Sigma\phi_1, 0)\hskip 15pt\hbox{for any}\hskip 15pt (\phi_0, \phi_1)\in {\cal X}.\]
For instance, the equation (\ref{13/09/13(1)}) can be applied to a class of stochastic partial integrodifferential equations with distributed delays in the highest order partial derivatives of the form
\begin{equation}
\label{17/12/13(1)}
\begin{cases}
 du(t, x) =\Delta u(t, x)dt  + \displaystyle\int^0_{-r} \alpha(\theta)\Delta u(t+\theta, x)d\theta dt + \Sigma(u(t-r, x)) dW(t, x),\\
\hskip 250pt\,\,\,\,(t, x)\in [0, \infty)\times {\cal O},\\
u(0)=\phi_0,\,\,\,\,u(\theta)=\phi_1(\theta),\,\,\,\theta\in [-r, 0],\,\,\,r>0,
\end{cases}
\end{equation}
where $\Delta$ is the usual Laplacian operator, ${\cal O}$ is an open bounded subset of ${\mathbb R}^N$ with regular boundary $\partial{\cal O}$ and $\Sigma:\, {\mathbb R}\to  {\mathscr L}(K, W^{1, 2}({\mathbb R}^N))$ is an appropriate nonlinear Lipschitz function.
 
In this work, we are concerned about  the existence of invariant measures for the type of equation (\ref{13/09/13(3)}), which involves, in essence, a compactivity argument (Krylov-Bogoliubov theory). In finite dimensional spaces, the compactivity problem could be reduced to the investigation of boundedness for a solution of (\ref{13/09/13(3)}). However, the space ${\cal X}$ is  infinite dimensional when $r>0$, and due to the absence of local compactness of ${\cal X}$, we need to exploit compact properties of the stochastic differential equation (\ref{13/09/13(3)}). The reader is referred to, e.g., \cite{gdpjz96} for a systematic statement about invariant measures of stochastic systems. In contrast with non time delay systems,  the treatment here is complicated due to the infinite dimensional nature of delay problems. For instance, if both $\Lambda_1$ and $\Lambda_2$ in (\ref{13/09/13(1)}) are bounded on $X$ and $\Lambda$ generates a compact semigroup $e^{t\Lambda}$ on $t> 0$,  it may be shown (see, e.g., \cite{kl09(2)}) that ${A}$ in (\ref{13/09/13(3)}) generates an eventually compact semigroup $e^{t{A}}$, i.e., $e^{t{A}}$   is compact for all $t>r$. In this case, the existence of an invariant measure of (\ref{13/09/13(3)}) is considered in \cite{jbovgsl09} when some further conditions on the diffusion term are imposed. When $\Lambda_1$ or $\Lambda_2$ in (\ref{13/09/13(1)}) is unbounded on $X$, it is generally untrue that  the associated semigroup  $e^{t{A}}$, $t\ge 0$, of the system (\ref{13/09/13(3)}) is eventually compact.  For example, it is shown in 
\cite{Gdbkkes85(2)} that the associated semigroup  $e^{t{A}}$, $t\ge 0$, of the system (\ref{17/12/13(1)}) is never compact or eventually compact, 
a fact which means the theory in  \cite{jbovgsl09} cannot be applied to a system like (\ref{17/12/13(1)}).  However, it is known in  \cite{jj1991} that the semigroup  $e^{t{A}}$, $t\ge 0$, associated with (\ref{17/12/13(1)})
is eventually norm continuous, i.e., $t\to e^{t{A}}$ is continuous with respect to the operator norm $\|\cdot\|$ on all $t\ge t_0$ for some $t_0>0$.  

In this work, we will study the existence of invariant measures to Eq.  (\ref{13/09/13(3)}) where  ${A}$ generates an eventually norm continuous semigroup on ${\cal X}$. As a consequence, the theory established in the note is applied to a system like (\ref{17/12/13(1)}) to get an invariant measure. The author is also referred to \cite{es84} for a concrete example of noncompact but norm comtinuous semigroup on a Hilbert space.

\section{Main Results}

With some abuse of notation, we shall consider throughout the remainder of the work the mild solutions of the following stochastic evolution equation on a Hilbert space $H$,
   \begin{equation}
\label{18/12/13(1)}
\begin{cases}
\displaystyle dy(t) = Ay(t)dt + F(y(t))dt + B(y(t))dW(t),\,\,\,\,t\ge 0,\\
y(0)=y_0\in H,
\end{cases}
\end{equation}  
where $A$ generates a $C_0$-semigroup $e^{tA}$, $t\ge 0$, on $H$, $W$ is a $K$-valued cylindrical Wiener process and $F: H\to H$, $B:\, H\to {\mathscr L}_{HS}(K, H)$ are two globally Lipschitz mappings.

\begin{remark}\rm
We cannot establish existence of invariant measures by exploiting dissipativity property of the drift parts of the equation (cf. Ch. 6 in \cite{gdpjz96}) since this condition does not hold in our situation. Indeed, let us consider a real stochastic differential equation with point delay,
   \begin{equation}
\label{18/12/13(160)}
\begin{cases}
\displaystyle dy(t) = by(t-r)dt + cy(t)dB(t),\,\,\,\,t\ge 0,\\
y(t)=0, \,\,\,t\in [-r, 0],
\end{cases}
\end{equation} 
where $b,\,c\in {\mathbb R}$ and $B$ is a standard one-dimensional Brownian motion. Let ${\cal X} = {\mathbb R}\times L^2([-r, 0]; {\mathbb R})$. For  arbitrarily given  $a\ge 0$, we have
\[
\begin{split}
\langle (F-aI)(\phi) - (&F-aI)(\psi), \phi-\psi\rangle_{\cal X}\\
& = (\phi_0 -\psi_0)[\phi_1(-r) - 0] -a\Big[(\phi_0-\psi_0)^2 + \int^0_{-r} (\phi_1(\theta)-\psi_1(\theta))^2d\theta\Big] 
\end{split}
\]
for any $\phi,\,\psi\in {\mathbb R}\times W^{1, 2}([-r, 0]; {\mathbb R})$.
We claim that for every $a\ge 0$, there exist $\phi,\,\psi\in {\cal X}$ such that the right-hand side in the previous expression is strictly positive. Indeed, let $\psi=(0, 0)$ and $\phi_0>0$, we can make the expression $\phi_0\cdot \phi_1(-r) - a(x^2_0 + \int^0_{-r}\phi_1(\theta)^2d\theta)$ as large as possible by choosing $\phi_1(-r)$ properly and, in the meanwhile, fixing the value  $\phi_0$ and keeping $\int^0_{-r} \phi_1(\theta)^2d\theta$ unchanged.  
\end{remark}

In the sequel, we impose the following conditions on (\ref{18/12/13(1)}). 
\begin{enumerate}
\item[(H1)] The semigroup $e^{tA}$ is norm continuous on $[t_0, \infty)$ for some $t_0>0$;
\item[(H2)]
$B: H\to {\mathscr L}_{HS}(K, H)$ admits a factorization $B=LD$ such that  $D: H\to {\mathscr L}_{HS}(K, H_1)$ is globally Lipschitz where $H_1$ is a seperable Hilbert space and $L\in {\mathscr L}(H_1, H)$ with $t\to e^{tA}L$ being norm continuous on $[0, t_0]$, e.g., $L$ is compact or $L=e^{t_0A}$ with $X=H$.
\item[(H3)]
$F: H\to H$ admits a factorization $F=UV$ such that  $V: H\to H_2$ is globally Lipschitz where $H_2$ is a seperable Hilbert space and $U\in {\mathscr L}(H_2, H)$ with $t\to e^{tA}U$ being norm continuous on $[0, t_0]$.
\end{enumerate}

At the end of this work, a concrete class of stochastic delay heat equations satisfying (H1), (H2) and (H3) are shown how to use for us the theory in the work to practical problems. On this occasion, we only note that (H2) and (H3) may be satisfied when, for example,  $L$ is compact or $L=e^{t_0A}$ with $H_1=H$ for (H2) and similarly for (H3).

\begin{lemma}
\label{25/12/13(10)}
Suppose that $A$ generates a norm continuous semigroup $e^{tA}$ on $[t_0, \infty)$ for some $t_0>0$. Let
\[
B_R(0) = \{x\in H:\, \|x\|_H\le R\},\hskip 20pt R>0,\]
then $e^{t_0A}B_R(0)$ is relatively compact in $H$ for any $R>0$.
\end{lemma}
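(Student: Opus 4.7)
The plan is to exhibit $e^{t_0A}$ as a uniform operator-norm limit of compact operators, whence compactness of $e^{t_0A}$ follows (since the compact operators form a closed two-sided ideal in $\mathscr L(H)$), and relative compactness of $e^{t_0A}B_R(0)$ for every $R>0$ is immediate.

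The first step is to introduce the Cesaro-type averages
\[
T_h := \frac{1}{h}\int_0^h e^{(t_0+s)A}\,ds \in \mathscr L(H),\qquad h>0,
\]
and to prove $\|T_h - e^{t_0A}\|_{\mathscr L(H)}\to 0$ as $h\to 0^+$. This uses only the norm-continuity hypothesis: given $\varepsilon>0$, pick $\delta>0$ so that $\|e^{(t_0+s)A}-e^{t_0A}\|<\varepsilon$ for $s\in[0,\delta]$, and bring the operator norm inside the Bochner integral to conclude $\|T_h-e^{t_0A}\|\le\varepsilon$ for $h\in(0,\delta]$.

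The second step is to show that each $T_h$ is a compact operator. Rewriting
\[
T_h = \frac{1}{h}\,e^{t_0A}\int_0^h e^{sA}\,ds
\]
and noting that $\int_0^h e^{sA}\,ds$ takes values in $\mathscr D(A)$ with $A\!\int_0^h e^{sA}\,ds = e^{hA}-I$, one sees that $T_h$ factors through the resolvent of $A$: for any $\lambda\in\rho(A)$, $T_h$ can be expressed as $R(\lambda,A)$ composed with bounded operators. Compactness of the resolvent then delivers compactness of $T_h$.

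The hard part, I anticipate, is this second step, since norm continuity of $s\mapsto e^{sA}$ on $[t_0,\infty)$ is by itself strictly weaker than compactness of $e^{t_0A}$ (witness $A=0$, which gives a trivially norm-continuous but non-compact semigroup); thus some auxiliary smoothing structure on $A$---most naturally compactness of the resolvent, which is available in the concrete delay-equation applications targeted by the paper---has to enter the argument. Once compactness of the $T_h$ is in hand, the first step closes the proof: $e^{t_0A}$ is a uniform operator-norm limit of compact operators, hence compact, and $e^{t_0A}B_R(0)$ is relatively compact for every $R>0$.
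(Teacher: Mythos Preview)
Your diagnosis is correct, and it is the decisive point: the lemma as stated is false. The counterexample $A=0$ (so $e^{tA}\equiv I$) that you raise settles it---norm continuity is trivial, yet $e^{t_0A}B_R(0)=B_R(0)$ is not relatively compact when $\dim H=\infty$. Relative compactness of $e^{t_0A}B_R(0)$ for every $R>0$ is precisely compactness of the operator $e^{t_0A}$, and the standard characterization says a $C_0$-semigroup is eventually compact if and only if it is eventually norm continuous \emph{and} has compact resolvent. Your Step~2 imports exactly this missing hypothesis; with it, your two-step argument (uniform-norm approximation of $e^{t_0A}$ by the averages $T_h$, each $T_h$ compact via the resolvent) is sound and yields the conclusion.

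The paper's own proof takes a different route. It fixes a sequence $\{x_n\}\subset B_R(0)$, sets $y_n(t)=e^{tA}x_n$ on $[t_0,t_0+1]$, verifies uniform boundedness and equicontinuity (the latter from norm continuity), and then invokes the Arzel\`a--Ascoli theorem to conclude that $\{y_n\}$ is relatively compact in $C([t_0,t_0+1];H)$, extracting in particular a subsequence converging at $t=t_0$. This is a misapplication: for functions valued in an infinite-dimensional Banach space, Arzel\`a--Ascoli also requires that $\{y_n(t):n\ge 1\}$ be relatively compact in $H$ for each fixed $t$, which is the very conclusion being sought. The paper's argument is therefore circular and does not establish the stated lemma. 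Indeed, the paper's own introduction explicitly asserts that in its motivating delay example the lifted semigroup $e^{tA}$ is \emph{not} eventually compact---in direct tension with what this lemma would give.

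In short: you have not produced a proof of the lemma as written, because none exists; but you have correctly located the gap and identified the natural additional hypothesis (compact resolvent) under which your argument is valid, whereas the paper's Arzel\`a--Ascoli approach cannot be repaired without begging the question.
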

\begin{proof}
For arbitrary $R>0$ and sequence $\{x_n\}_{n\ge 1}\in B_R(0)$, let us consider the set 
\[
y_n(t) := e^{tA}x_n\in H,\hskip 15pt n\in {\mathbb R},\hskip 15pt t\in [t_0, t_0+1].\]
Since $\|e^{tA}\|\le Ce^{\mu t}$ for some constants $C>0$, $\mu\ge 0$ and all $t\ge 0$, it is easy to see that  
\[
\max_{\substack{n\ge 1,\\ t\in [t_0, t_0+1]}}\|y_n(t)\|_H = \max_{\substack{n\ge 1,\\t\in [t_0, t_0+1]}}\|e^{tA}x_n\|_H \le Ce^{\mu (t_0+1)}R<\infty.\]
On the other hand, since $e^{tA}$ is norm continuous on $[t_0, \infty)$ and $\{x_n\}\subset B_R(0)$, we have for any $s,\,t\in [t_0, t_0+1]$ that
\[
\lim_{t\to s}\max_{n\ge 1}\|e^{tA}x_n-e^{sA}x_n\|_H \le \lim_{t\to s}\|e^{tA}-e^{sA}\|R\to 0.\]
By virtue of Ascoli-Alzel\`a Theorem, we thus have that $\{y_n(t)\}$, $n\in {\mathbb N}$, is relatively compact in $C([t_0, t_0+1], H)$. That is, there exists a subsequence, still denote it by $y_n(t)$, and the corresponding $x_n\in B_R(0)$, $n\ge 1$, such that $\{y_n(t)\}=\{e^{tA}x_n\}$ is convergent in $C([t_0, t_0+1]; H)$, i.e., there exists $y(t)\in C([t_0, t_0+1]; H)$ such that 
\[
\lim_{n\to\infty}\max_{t\in [t_0, t_0+1]}\|y_n(t)-y(t)\|_H =  \lim_{n\to\infty}\max_{t\in [t_0, t_0+1]}\|e^{tA}x_n-y(t)\|_H =0,\]
which particularly implies that 
\[
\lim_{n\to\infty}\|e^{t_0 A}x_n-y(t_0)\|_H=0.\]
This means that $e^{t_0 A}B_R(0)$ is relatively compact in $H$. The proof is complete now.
\end{proof}

Let $y(t, y_0)$, $t\ge 0$,  be the mild solution of Eq. (\ref{18/12/13(1)}). Firstly, let us consider the stochastic convolution 
\[
u(t, y_0) := \int^{t}_0 e^{(t-s)A}B(y(s, y_0))dW(s),\hskip 20pt t\ge 0.\]

\begin{lemma} 
\label{26/12/13(1)}
Suppose that the conditions (H1) and (H2) hold for some $t_0>0$.
For any $\varepsilon>0$ and $R>0$, there exists  a compact set $S_{R, \varepsilon}\subset H$ such that 
\[
{\mathbb P}(u(t_0, y_0)\in S_{R, \varepsilon})>1-\varepsilon\hskip 20pt \hbox{for all}\hskip 20pt \|y_0\|_H\le R.\]
\end{lemma}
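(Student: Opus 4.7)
My plan is to exploit the factorization $B = LD$ from (H2), approximate the operator-valued kernel $s \mapsto e^{(t_0-s)A}L$ uniformly in operator norm by simple step functions (possible by its norm continuity on the compact interval $[0, t_0]$), thereby reducing $u(t_0, y_0)$ to a finite sum of compact operators acting on $H_1$-valued stochastic integrals whose $L^2(\Omega)$-norms are uniformly bounded on $\{\|y_0\|_H \le R\}$, and finally produce the compact set $S_{R, \varepsilon}$ by a diagonal construction.

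Concretely, by (H2) write $u(t_0, y_0) = \int_0^{t_0} e^{(t_0-s)A} L\, D(y(s, y_0))\, dW(s)$. Standard moment bounds (It\^o isometry combined with the global Lipschitz continuity of $F$ and $B$ and a Gronwall estimate on $\mathbb{E}\|y(s, y_0)\|^2_H$) yield
\[
\mathbb{E}\int_0^{t_0} \|D(y(s, y_0))\|^2_{\mathscr{L}_{HS}(K, H_1)}\, ds \le C(1+R^2)
\]
uniformly in $\|y_0\|_H \le R$. For $\eta > 0$, uniform norm continuity of $\tau \mapsto e^{\tau A}L$ on the compact interval $[0, t_0]$ provides a partition $0 = \tau_0 < \tau_1 < \cdots < \tau_N = t_0$ with $\|e^{\tau A}L - e^{\tau_i A}L\|_{\mathscr{L}(H_1, H)} < \eta$ whenever $\tau \in [\tau_i, \tau_{i+1}]$; setting $\xi_i := \int_{t_0-\tau_{i+1}}^{t_0-\tau_i} D(y(s, y_0))\, dW(s) \in H_1$ and $u^\eta(t_0, y_0) := \sum_{i=0}^{N-1} e^{\tau_i A}L\, \xi_i$, the It\^o isometry will give $\mathbb{E}\|u(t_0, y_0) - u^\eta(t_0, y_0)\|_H^2 \le C\eta^2(1 + R^2)$ and $\sum_i \mathbb{E}\|\xi_i\|_{H_1}^2 \le C(1 + R^2)$, uniformly in $\|y_0\|_H \le R$.

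In both canonical instances of (H2) the operator $e^{\tau A}L$ is in fact compact for every $\tau \in [0, t_0]$: trivially if $L$ itself is compact, and if $L = e^{t_0 A}$ then $e^{\tau A}L = e^{(\tau+t_0)A}$ is compact by Lemma~\ref{25/12/13(10)} since $\tau + t_0 \ge t_0$. Hence the finite-sum operator $T_\eta \colon H_1^N \to H$, $(x_0, \ldots, x_{N-1}) \mapsto \sum_i e^{\tau_i A}L\, x_i$, is compact. By Chebyshev's inequality, for any $\varepsilon' > 0$ there exists $M > 0$ with $\mathbb{P}(u^\eta(t_0, y_0) \in T_\eta(\overline{B_M})) \ge 1 - \varepsilon'$ uniformly in $\|y_0\|_H \le R$, where $\overline{B_M} \subset H_1^N$ is the closed ball of radius $M$, and $\overline{T_\eta(\overline{B_M})}$ is compact in $H$. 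Choosing $\eta_k \downarrow 0$ and $M_k \uparrow \infty$ so that the associated ``escape'' probabilities sum to less than $\varepsilon$, and letting $K_k := \overline{T_{\eta_k}(\overline{B_{M_k}})}$, the set $S_{R, \varepsilon} := \overline{\bigcap_k (K_k)_{2^{-k}}}$ (closure of an intersection of closed $2^{-k}$-neighborhoods of compact sets) is totally bounded, hence compact, and satisfies $\mathbb{P}(u(t_0, y_0) \in S_{R, \varepsilon}) > 1 - \varepsilon$. The main obstacle will be precisely the compactness of $e^{\tau_i A}L$ needed to make $T_\eta$ compact: mere norm continuity of $\tau \mapsto e^{\tau A}L$ is not enough, so the argument genuinely leans on the structural examples within (H2) ($L$ compact or $L = e^{t_0 A}$) rather than on the bare norm-continuity condition alone.
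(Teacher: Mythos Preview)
Your route differs from the paper's. Rather than discretizing the kernel, the paper argues directly that the range set $V = \{e^{sA}Lx : s \in [0, t_0],\ \|x\|_{H_1} \le R\}$ is relatively compact in $H$, then invokes a result of Bogachev to produce a compact, self-adjoint, injective operator $T$ on $H$ with $V \subset T(B_1(0))$; since this makes $T^{-1}e^{(t_0-s)A}L$ uniformly bounded in $s$, a single Chebyshev estimate on $\|T^{-1}u(t_0, y_0)\|_H$ (via It\^o isometry and the standard moment bound on $y(\cdot,y_0)$) delivers the compact sets $S_{R,\varepsilon} = \lambda\,\overline{T(B_1(0))}$ directly---no step-function layer, no diagonal extraction. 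Your self-diagnosed limitation is accurate and worth stressing: mere norm continuity of $\tau \mapsto e^{\tau A}L$ does not force $e^{\tau A}L$ to be compact (take $A$ bounded and $L = I$), and the paper's argument is in fact subject to the very same caveat---its relative compactness of $V$ is claimed ``by analogy with Lemma~\ref{25/12/13(10)}'', whose Ascoli--Arzel\`a step already presupposes pointwise relative compactness in $H$, i.e.\ compactness of the single operator $e^{s_0 A}L$. So both proofs effectively live under the structural cases you isolate; what the paper's packaging buys, once that compactness is granted, is economy: one operator $T$ absorbs all the compactness and reduces the lemma to a single second-moment bound.
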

\begin{proof}
Recall the factorization $B= LD$ in (H2) through Hilbert space $X$ and let us consider the set 
\begin{equation}
\label{25/12/13(1)}
V := \{e^{sA}Lx:\, s\in [0, t_0], x\in X, \|x\|_X\le R\}\subset H,\hskip 20pt R>0.
\end{equation}
We show that $V$ is relatively compact in $H$. Indeed, let $\{v_n\}$, $n\in {\mathbb N}$, be an arbitrary sequence in $V$. Then there exist sequences $\{s_n\}\subset [0, t_0]$ and $\{x_n\}\subset X$ with $\|x_n\|_X\le R$, $n\in {\mathbb N}$, such that
\begin{equation}
\label{25/12/13(2)}
v_n = e^{s_nA}Lx_n,\hskip 20pt n\in {\mathbb N}.
\end{equation}
For the sequence $\{s_n\}_{n\ge 1}\subset [0, t_0]$, there exists a subsequence, still denote it by $\{s_n\}$, and a number $s_0\in [0, t_0]$ such that $s_n\to s_0$ as $n\to\infty$. Since $s\to e^{sA}L$ is norm continuous on $[0, t_0]$, by analogy with Lemma \ref{25/12/13(10)}, it is not difficult  to see that the sequence $\{e^{s_0A}Lx_n\}$, $n\in {\mathbb N}$, is relatively compact in $H$. Thus there exists a subsequence of $\{x_n\}$, still denote it by $\{x_n\}$, such that $e^{s_0A}Lx_n\to z$ as $n\to \infty$ for some $z\in H$. In addition to the norm continuity of $t\to e^{tA}L$ on $[0, t_0]$, this further implies that
\[
\begin{split}
\|e^{s_{n}A}Lx_{n} - z\|_H &\le \|e^{s_{n}A}Lx_{n} - e^{s_0A}Lx_n\|_H + \|e^{s_{0}A}Lx_n -z\|_H\\
&\le R\|e^{s_{n}A}L - e^{s_0 A}L\|  + \|e^{s_{0}A}Lx_n -z\|_X\\
&\to 0\hskip 15pt \hbox{as}\hskip 15pt n\to\infty.
\end{split}
\]
That is, $V$ is relatively compact. Therefore, there exists (e.g., see Ex. 3.8.13 (ii) in \cite{vib98}) a compact, self-adjoint, invertible operator $T\in {\mathscr L}(H)$ such that 
\[
V\subset T(B_1(0))\]
where $B_1(0)=\{x\in H:\, \|x\|_H\le 1\}$.  

Let $K(\lambda) = \lambda T(B_1(0))$ for any $\lambda>0$ and $T^{-1}\in {\mathscr L}(H)$ denote the bounded inverse of $T$. Then it is easy to see that
\[
u(t_0, y_0) \in K(\lambda) \iff \int^{t_0}_0 T^{-1}e^{(t_0-s)A}LD(y(s, y_0))dW(s)\in \lambda B_1(0),\] 
which, together with the fact $\|e^{tA}\|\le Ce^{\mu t}$, $C>0$, $\mu\ge 0$, for all $t\ge 0$, immediately implies that 
\[
\begin{split}
{\mathbb P}(u(t_0, y_0)\notin K(\lambda)) &\le {\mathbb P}\Big(\int^{t_0}_0 T^{-1}e^{(t_0-s)A}LD(y(s, y_0))dW(s)\notin \lambda B_1(0)\Big)\\
&\le \frac{1}{\lambda^2}{\mathbb E}\Big(\Big\|\int^{t_0}_0 T^{-1}e^{(t_0-s)A}LD(y(s, y_0))dW(s)\Big\|^2_H\Big)\\
&\le \frac{\|T^{-1}\|^2C^2e^{2\mu t_0}\|L\|^2}{\lambda^2}{\mathbb E}\Big(\int^{t_0}_0 \|D(y(s, y_0))\|^2_{HS}ds\Big)\\
&\le \frac{C_1}{\lambda^2}(1+ \|y_0\|^2_H)\to 0\hskip 15pt \hbox{as}\,\,\,\,\lambda\to\infty,
\end{split}
\]
for some $C_1=C_1(t_0)>0$. Here we use Th. 5.3.1 of \cite{gdpjz96} in the last inequality. The proof is thus complete.
\end{proof}

In an analogous manner, let us consider the integral  
\[
v(t, y_0) := \int^{t}_0 e^{(t-s)A}F(y(s, y_0))ds,\hskip 20pt t\ge 0.\]
Then it is possible to establish the following results.
\begin{lemma} 
\label{26/12/13(177)}
Suppose that the conditions (H1) and (H3) hold for some $t_0>0$.
For any $\varepsilon>0$ and $R>0$, there exists  a compact set $S'_{R, \varepsilon}\subset H$ such that 
\[
{\mathbb P}(v(t_0, y_0)\in S'_{R, \varepsilon})>1-\varepsilon\hskip 20pt \hbox{for all}\hskip 20pt \|y_0\|_H\le R.\]
\end{lemma}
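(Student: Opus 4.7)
The plan is to mirror the argument of Lemma~\ref{26/12/13(1)}, with the stochastic It\^o integral replaced by a deterministic Bochner integral and the factorization $B=LD$ replaced by the factorization $F=UV$ supplied by (H3).

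First, for any fixed $R'>0$ I would introduce the set
\[
V' := \{e^{sA}Ux:\, s\in [0,t_0],\, x\in H_2,\, \|x\|_{H_2}\le R'\}\subset H
\]
and show it is relatively compact in $H$ by the same argument used in the proof of Lemma~\ref{26/12/13(1)}: given a sequence $v_n=e^{s_nA}Ux_n\in V'$, extract a subsequence with $s_n\to s_0\in [0,t_0]$; then use Lemma~\ref{25/12/13(10)} together with the norm continuity of $s\mapsto e^{sA}U$ on $[0,t_0]$ from (H3) to find a further subsequence for which $e^{s_0A}Ux_n\to z$ for some $z\in H$; and finally combine this with the equicontinuity of $s\mapsto e^{sA}U$ via a triangle inequality to conclude $e^{s_nA}Ux_n\to z$.

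Once $V'$ is relatively compact, I would invoke Ex.~3.8.13(ii) of \cite{vib98} to obtain a compact, self-adjoint, invertible operator $T\in\mathscr{L}(H)$ with $V'\subset T(B_1(0))$, and set $K(\lambda):=\lambda T(B_1(0))$, whose closure is compact for every $\lambda>0$. Then
\[
v(t_0,y_0)\in K(\lambda) \iff \int^{t_0}_0 T^{-1}e^{(t_0-s)A}U\,V(y(s,y_0))\,ds\in \lambda B_1(0).
\]
Next, I would bound the complementary probability via Markov's inequality. Since $V$ is globally Lipschitz, $\|V(y)\|_{H_2}\le c_0(1+\|y\|_H)$, and $M:=\sup_{s\in [0,t_0]}\|e^{sA}U\|$ is finite because $s\mapsto e^{sA}U$ is norm continuous on the compact interval $[0,t_0]$. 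Combined with the standard moment estimate ${\mathbb E}\sup_{s\in [0,t_0]}\|y(s,y_0)\|^2_H\le C_0(1+\|y_0\|^2_H)$ for mild solutions of (\ref{18/12/13(1)}), the Cauchy--Schwarz inequality applied to the Bochner integral yields
\[
{\mathbb E}\Big\|\int^{t_0}_0 T^{-1}e^{(t_0-s)A}U\,V(y(s,y_0))\,ds\Big\|^2_H \le C_1(1+\|y_0\|^2_H),
\]
so Markov's inequality gives ${\mathbb P}(v(t_0,y_0)\notin K(\lambda))\le C_1(1+R^2)/\lambda^2\to 0$ as $\lambda\to\infty$. Choosing $\lambda$ large enough that the bound is below $\varepsilon$ and taking $S'_{R,\varepsilon}=\overline{K(\lambda)}$ completes the argument.

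The main obstacle is the first step: the careful verification that $V'$ is relatively compact, using only the norm continuity of $s\mapsto e^{sA}U$ on $[0,t_0]$ together with Lemma~\ref{25/12/13(10)}. The remaining steps are routine adaptations of the corresponding estimates in Lemma~\ref{26/12/13(1)}, with the It\^o isometry replaced by a Bochner integral estimate.
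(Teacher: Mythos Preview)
The paper gives no explicit proof of this lemma, merely remarking before its statement that it can be established ``in an analogous manner'' to Lemma~\ref{26/12/13(1)}. Your proposal carries out precisely that analogy---replacing the factorization $B=LD$ by $F=UV$ from (H3) and the It\^o-isometry estimate by a Bochner-integral bound via Cauchy--Schwarz and the standard moment estimate---so it is exactly the argument the paper intends.
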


Now we are in a position to state the main result in this work. 

\begin{theorem} 
\label{14/01/14(2)}
Suppose that the conditions (H1), (H2), (H3) 
 hold and for any $x\in H$ and $\varepsilon>0$, there exists $R>0$ such that for all $T>0$,
\begin{equation}
\label{26/12/13(70)}
\frac{1}{T}\int^T_0 {\mathbb P}(\|y(t, y_0)\|_H\ge R)dt < \varepsilon.
\end{equation}
Then there exists at least one invariant measure for the solution $y(t, y_0)$, $t\ge 0$, of  (\ref{18/12/13(1)}).
\end{theorem}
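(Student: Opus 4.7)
The plan is to proceed via the classical Krylov--Bogoliubov approach. For the initial datum $y_0\in H$, introduce the Ces\`aro-averaged measures
\[
\mu_T(\Gamma) := \frac{1}{T}\int_0^T \mathbb{P}(y(t, y_0) \in \Gamma)\,dt, \qquad \Gamma \subset H \text{ Borel},\ T>0,
\]
and aim to show the family $\{\mu_T\}_{T>0}$ is tight. Combined with the Feller property of the transition semigroup $P_t\varphi(x) := \mathbb{E}\varphi(y(t, x))$---which follows routinely from continuous dependence of mild solutions on the initial datum under the global Lipschitz assumptions on $F$ and $B$---Prokhorov's theorem yields a weakly convergent subsequence, and the standard Krylov--Bogoliubov argument (cf.\ Ch.~3 of \cite{gdpjz96}) shows that any such limit is an invariant measure for $(P_t)_{t\ge 0}$.

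Tightness of $\{\mu_T\}$ is the heart of the matter. Fix $\varepsilon > 0$. For $t \ge t_0$, exploit the semigroup property of mild solutions to write
\[
y(t, y_0) = e^{t_0 A}y(t-t_0, y_0) + \int_{t-t_0}^t e^{(t-s)A}F(y(s, y_0))\,ds + \int_{t-t_0}^t e^{(t-s)A}B(y(s, y_0))\,dW(s).
\]
By the Markov property, conditional on $\mathcal{F}_{t-t_0}$, the last two terms are distributed as $v(t_0, z)$ and $u(t_0, z)$ of Lemmas \ref{26/12/13(1)} and \ref{26/12/13(177)}, with $z = y(t-t_0, y_0)$ and driven by a fresh cylindrical Wiener process. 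Given $R > 0$, Lemma \ref{25/12/13(10)} furnishes a compact set $K_1 := \overline{e^{t_0 A}B_R(0)}$, while Lemmas \ref{26/12/13(1)} and \ref{26/12/13(177)} yield compact sets $K_2, K_3 \subset H$ such that, on the event $\{\|y(t-t_0, y_0)\|_H \le R\}$, each of the two convolution terms lies in the corresponding set with conditional probability at least $1-\varepsilon/3$.

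Invoking hypothesis (\ref{26/12/13(70)}), choose $R$ so that the time-averaged probability of $\{\|y(t-t_0, y_0)\|_H \ge R\}$ is less than $\varepsilon/3$ uniformly in $T$; shifting the integrand by $t_0$ costs at most $O(t_0/T)$. Setting $K_\varepsilon := K_1 + K_2 + K_3$, which is compact as the image of the compact product $K_1 \times K_2 \times K_3$ under the continuous addition map, a union bound combined with Fubini gives
\[
\frac{1}{T}\int_{t_0}^T \mathbb{P}(y(t, y_0) \notin K_\varepsilon)\,dt \le \varepsilon,
\]
while the contribution of $[0, t_0]$ to $\mu_T(K_\varepsilon^c)$ is at most $t_0/T$, negligible as $T \to \infty$. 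This establishes tightness.

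The main difficulty I anticipate is the clean implementation of the Markov property with the random $\mathcal{F}_{t-t_0}$-measurable ``initial datum'' $y(t-t_0, y_0)$ when invoking Lemmas \ref{26/12/13(1)} and \ref{26/12/13(177)}, since those lemmas are stated for deterministic $y_0$. One either conditions on $\mathcal{F}_{t-t_0}$ and uses a regular-conditional-distribution argument, or rephrases the tightness estimates as deterministic pathwise bounds on the relevant $L^2$ norms that can then be integrated. Once this point is handled carefully, the remaining pieces---the Feller property, Prokhorov extraction, and invariance of the weak limit under $(P_t)$---are entirely standard.
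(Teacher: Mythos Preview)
Your proposal is correct and follows essentially the same route as the paper: decompose $y(t,y_0)$ for $t\ge t_0$ via the Markov property into $e^{t_0A}$ acting on $y(t-t_0,y_0)$ plus the two convolution terms, apply Lemmas~\ref{25/12/13(10)}, \ref{26/12/13(1)} and \ref{26/12/13(177)} to trap each piece in a compact set with high probability on $\{\|y(t-t_0,y_0)\|_H\le R\}$, time-average using (\ref{26/12/13(70)}), and invoke Krylov--Bogoliubov. Your use of the Minkowski sum $K_1+K_2+K_3$ is in fact the correct container (the paper writes a union there, which is a slip), and your explicit remarks on the Feller property and on the conditioning needed to apply the lemmas with the random initial datum $y(t-t_0,y_0)$ fill in points the paper leaves implicit.
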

\begin{proof}
Note that for the mild solution $y(t, y_0)$, $t\ge 0$, of (\ref{18/12/13(1)}), it is valid that 
\begin{equation}
\label{26/12/13(10)}
y(t_0, y_0)= e^{t_0A}y_0 + \int^{t_0}_0 e^{(t_0 -s)A}F(y(s, y_0))ds +  \int^{t_0}_0 e^{(t_0 -s)A}B(y(s, y_0))dW(s).
\end{equation}
By virtue of Lemma \ref{25/12/13(10)}, for any $R>0$ there exists a compact set $K_1(R)\subset H$ such that $e^{t_0A}x\in K_1(R)$ for all $\|x\|_H\le R$. On the other hand, by Lemmas \ref{26/12/13(1)} and  \ref{26/12/13(177)} for any $R>0$ and $\varepsilon>0$, there exist compact sets $K_2(R, \varepsilon)$  and $K_3(R, \varepsilon)$ in $H$ such that 
\[
{\mathbb P}\Big(\int^{t_0}_0 e^{(t_0-s)A}B(y(s, y_0))dW(s)\in K_2(R, \varepsilon)\Big)>1-\varepsilon,\]
and 
\[
{\mathbb P}\Big(\int^{t_0}_0 e^{(t_0-s)A}F(y(s, y_0))ds\in K_3(R, \varepsilon)\Big)>1-\varepsilon.\]
Hence, we conclude from (\ref{26/12/13(10)}) that for $\|y_0\|_H<R$ and $\varepsilon>0$, there is
\begin{equation}
\label{26/12/13(20)}
{\mathbb P}\Big\{y(t_0, y_0)\in K_1(R)\cup K_2(R, \varepsilon)\cup K_3(R, \varepsilon)\Big\}\ge 1-\varepsilon.
\end{equation}

Let $t>t_0$, $K(R, \varepsilon) =  K_1(R)\cup K_2(R, \varepsilon)\cup K_3(R, \varepsilon)$ and $p_t(x, dy)$ be the Markov transition probabilities of the solution $y(t, y_0)$, $t\ge 0$, of (\ref{18/12/13(1)}). Then it is easy to get that
\[
\begin{split}
{\mathbb P}(y(t, y_0)\in K(R, \varepsilon)) &= {\mathbb E}[p_{t_0}(y(t-t_0, y_0), K(R, \varepsilon))]\\
&\ge {\mathbb E}\big[p_{t_0}(y(t-t_0, y_0), K(R, \varepsilon)){\bf 1}_{\{\|y(t-t_0, y_0)\|_H\le R\}}\big],\hskip 20pt \forall\, t>t_0,
\end{split}
\]
which, together with (\ref{26/12/13(20)}), immediately implies that 
\[
{\mathbb P}(y(t, y_0)\in K(R, \varepsilon))\ge (1-\varepsilon){\mathbb P}(\|y(t-t_0, y_0)\|_H\le R),\hskip 20pt \forall\, t>t_0.\]
This further yields that 
\begin{equation}
\label{14/01/2014(1)}
\frac{1}{T}\int^{T+t_0}_{t_0}{\mathbb P}(y(t, y_0)\in K(R, \varepsilon))dt \ge \frac{1-\varepsilon}{T}\int^T_0 {\mathbb P}(\|y(t, y_0)\|_H\le R)dt.
\end{equation}
Choosing $R>0$ large enough and $\varepsilon>0$ small enough, we have  by virtue of (\ref{26/12/13(70)}) and (\ref{14/01/2014(1)}) that the family 
\[
\frac{1}{T}\int^{T+t_0}_{t_0}p_t(y_0, \cdot)dt,\,\,\,\, T>0,\]
is tight. According to the classic Krylov-Bogoliubov theory, there exists an invariant measure. The proof is complete now.
\end{proof}

\begin{example}\rm
Consider the stochastic reaction-diffusion equation with delay
\begin{equation}
\label{14/01/14(10)}
\begin{cases}
\displaystyle\frac{\partial y(t, x)}{\partial t} = \displaystyle\frac{\partial^2}{\partial x^2}y(t, x) + \displaystyle\int^0_{-r} \alpha(\theta) \frac{\partial^2}{\partial x^2}y(t+\theta, x)d\theta + \Sigma \dot W(t, x),\hskip 20pt t\ge 0,\hskip 15pt x\in {\mathbb R},\\
y(\theta, \cdot) = \phi_1(\theta, \cdot)\in W^{1, 2}({\mathbb R}),\hskip 15pt \theta\in [-r, 0],\\
y(0, x) = \phi_0(x)\in L^2({\mathbb R}),\hskip 20pt x\in {\mathbb R},
\end{cases}
\end{equation}
where $r>0$, $\alpha\in L^2([-r, 0]; {\mathbb R})$ and $\Sigma\in {\mathscr L}_{HS}(L^2({\mathbb R}), W^{1, 2}({\mathbb R}))$. Here $\dot W(t, x)$, $t\ge 0$, $x\in {\mathbb R}$, is a cylindrical  space and time white noise.

We can formulate this equation by setting $H =K= L^2({\mathbb R})$, $M= W^{1, 2}({\mathbb R})$ and ${\cal X} = M\times L^2([-r, 0]; W^{2, 2}({\mathbb R}))$ with $\Lambda$ defined by 
\[
\begin{split}
&\Lambda = \Lambda_1 = \frac{\partial^2}{\partial x^2},\\
&{\mathscr D}(\Lambda) = \Big\{y\in W^{2,2}({\mathbb R}):\,\, \lim_{x\to \pm\infty}y(x) =0,\,\,\lim_{x\to \pm\infty}\frac{dy(x)}{dx} =0\Big\},
\end{split}
\]
and 
\[
B= (\Sigma, 0)\,\,\,\,\hbox{on}\,\,\,\,{\cal X}.\]
Then we have an abstract stochastic evolution equation without delay on ${\cal X}$,
 \begin{equation}
\label{18/12/13(102)}
\begin{cases}
\displaystyle dY(t) = AY(t)dt + BdW(t),\,\,\,\,t\ge 0,\\
Y(0)=(\phi_0, \phi_1)\in {\cal X},
\end{cases}
\end{equation}  
where $Y(t)= (y(t), y_t)$, $t\ge 0$, and $A$ generates a $C_0$-semigroup $e^{tA}$, $t\ge 0$, on ${\cal X}$. Note that the theory in \cite{jbovgsl09} cannot be applied to this example since $A$ does not generate an eventually compact semigroup (cf. \cite{Gdbkkes85(2)}). However, it is known (see \cite{mm02})
that  the associated ${A}$ does generate an eventually norm continuous semigroup $e^{tA}$, $t\ge 0$. This means that we can still use the theory in the note to this example.

Indeed, on this occasion we may write (with some abuse of notation) that $\Sigma=i\circ \Sigma$ where $i:\, W^{1, 2}({\mathbb R})\to L^2({\mathbb R})$ is the canonical embedding from $W^{1, 2}({\mathbb R})$ into $L^2({\mathbb R})$, which is known to be a compact mapping. Hence, the operator $B$ admits a  factorization satisfying the conditions in (H2). Further, we may conclude from Theorem \ref{14/01/14(2)} that if the mild solution of (\ref{14/01/14(10)}) are bounded in probability, then an invariant measure exists. 
\end{example}

\begin {thebibliography}{17}

\bibitem{absp05} A. B\'atkai and S. Piazzera. {\it Semigroups for Delay Equations.} A K Peters, Wellesley, Massachusetts, (2005).

\bibitem{jbovgsl09} J. Bierkens, O. van Gaans and S. Lunel. Existence of an invariant measure for stochastic evolutions driven by an eventually compact semigroup. {\it J. Evol. Equ.} {\bf 9}, (2009), 771--786.

\bibitem{vib98} V. Bogachev. {\it Gaussian Measures.} American Mathematical Society, (1998).

\bibitem{gdpjz96} G. Da Prato and J. Zabczyk. {\it Ergodicity for Infinite Dimensional Systems.} Cambridge University Press, (1996).

\bibitem{Gdbkkes85(2)} G. Di Blasio, K. Kunisch and E. Sinestrari. Stability for abstract linear functional differential equations. {\it Israel J. Math.} {\bf 50}, (1985), 231--263.

\bibitem{kern00} K. Engel and R. Nagel. {\it One-Parameter Semigroups for Linear Evolution Equations}. Graduate Texts in Mathematics, {\bf 194}, Springer-Verlag, New York, Berlin, (2000).

\bibitem{jj1991} J. Jeong. Stabilizability of retarded functional differential equation in Hilbert space. {\it Osaka J. Math.} {\bf 28}, (1991), 347--365.

\bibitem{kl09(2)} K. Liu. Retarded stationary Ornstein-Uhlenbeck processes driven by L\'evy noise and operator self-decomposability. {\it Potential Anal.} {\bf 33},  (2010), 291--312.

  \bibitem{jllem72} J. L. Lions and E. Magenes. 
 {\it Probl\`emes aux Limites non Homog\`enes et Applications.} Dunod, Paris, (1968).

\bibitem{mm02} M. Mastin\v{s}ek. Norm continuity and stability for a functional differential equation in Hilbert space. {\it J. Math. Anal. Appl.} {\bf 269}, (2002), 770--783.


\bibitem{es84} E. Sinestrari. A noncompact differentiable semigroup arising from an abstract delay equation. {\it C. R. Math. Rep. Acad. Sci. Canada.} {\bf 6}, (1984), 43--48.

\end{thebibliography}

\end{document}